\documentclass[11pt]{amsart}
\usepackage{amsmath,amssymb,amsthm}
\usepackage{amsrefs}
\usepackage{hyperref}

  \newtheorem{lemma}{Lemma}[section]
\newtheorem{thm}[lemma]{Theorem}

\newtheorem{prop}[lemma]{Proposition}
\newtheorem{conjecture}[lemma]{Conjecture}
\theoremstyle{definition}
\newtheorem{definition}[lemma]{Definition}
\newtheorem{remark}[lemma]{Remark}
\newtheorem{example}[lemma]{Example}
\theoremstyle{remark}

  \newcommand{\bC}{\mathbb{C}}
    \newcommand{\bR}{\mathbb{R}}
      \newcommand{\bP}{\mathbb{P}}
  \newcommand{\A}{\mathcal{A}}
   \newcommand{\B}{\mathcal{B}}
   \newcommand{\PP}{{\mathbb P}}
\DeclareMathOperator{\ind}{ind}

\begin{document}
\title{Confocal Families of Plane Algebraic Curves}

\author[R.~Piene]{Ragni Piene}
\address{Ragni Piene\\Department of Mathematics\\
University of Oslo\\P.O.Box 1053 Blindern\\NO-0316 Oslo\\Norway}
\email{\href{mailto:ragnip@math.uio.no}{ragnip@math.uio.no}}
\urladdr{\href{http://www.mn.uio.no/math/english/people/aca/ragnip/index.html}
{www.mn.uio.no/math/english/people/aca/ragnip/index.html}}

\author[B. Shapiro]{Boris Shapiro}
\address{Boris Shapiro\\Department of Mathematics\\Stockholm University\\SE-10691\\Stockholm\\Sweden}
\email{\href{mailto:shapiro@math.su.se}{shapiro@math.su.se}}
\urladdr{\href{https://staff.math.su.se/shapiro}
{https://staff.math.su.se/shapiro}}

\date{\today}

\begin{abstract}
We study real plane algebraic curves with prescribed focal divisors.  Confocality is reformulated by means of a focal map on real equiclassical families of dual curves.  The differential of this map is governed by a focal adjoint line bundle on the normalization of the dual curve; this bundle is obtained from the class adjoint bundle by subtracting one hyperplane section.  As an application, we prove a maximal-rank statement for rational nodal-cuspidal dual curves and obtain the expected local dimension of their confocal loci.  We also compare this deformation-theoretic picture with several classical constructions of curves with prescribed foci.
\end{abstract}

\subjclass[2020]{Primary 14H50; Secondary 14H10, 14B07, 14N05, 14P05}

\keywords{foci of algebraic curves, confocal curves, plane algebraic curves, dual curves, equiclassical deformations, adjoint line bundles, focal map}

\maketitle

\centerline{\emph{To the late Vladimir Arnold on the occasion of his 90th birthday}}

\section{Introduction}

The main goal of this paper is to study families of real plane algebraic curves admitting confocal deformations, that is, positive-dimensional families of curves with the same focal divisor.  Equivalently, for curves with simple real foci, this means families sharing the same set of real foci.

In the nineteenth century, foci of ellipses were generalized to foci of higher-degree curves: the (real) \emph{foci} of a real algebraic curve $C \subset \bP^2$ are the real points on isotropic tangent lines to $C$, see \cite{Sa}*{p.~120}. In traditional terminology associated with an algebraic curve $F(x,y,z)=0$ in the complex projective plane $\bP^2$, an isotropic line is one containing either of the circular points $p_\pm=(1:\pm i :0)$ (the pair of points common to all circles). Although this notion is not especially intuitive at first sight, it appears in a number of research areas including approximation theory, numerical analysis, and fluid models. This is the main reason for discussing foci of real algebraic curves again.

\subsection*{Short historical account}
\label{sec:int}
 The points now called foci first arose in connection with conic sections. According to the classical book \cite{Go}*{p.~252} on the history of mathematics, Apollonius of Perga, who lived in the third century B.C., incidentally discovered the foci of the ellipse and hyperbola. Five hundred years later Pappus of Alexandria found the focus of the parabola. The theory of foci was first worked out systematically by J.~Kepler while formulating his famous laws of planetary motion in the early seventeenth century; he also introduced the term \emph{focus}. The general notion of a focus of an algebraic curve of degree higher than two does not seem to have been established until 1832 by J.~Pl\"ucker, see \cite{Pl}*{p.~85}.

A general discussion of foci and their properties may be found in \cite{An, Sa, Ba, Wi, Hi, Co, Ri, Ich}.  Hilton and Jervis describe graphically the foci of several cubics and quartics, see \cite{HiJe}. 
These works study both the geometry of foci of individual curves and inverse problems with prescribed foci.
Roberts studies foci and confocal systems of various plane curves of degree $\le 4$, see \cite{Roberts1901,Roberts1904}. Jeffrey considers cubics of class three with prescribed focal configurations and cubics with a double and a simple focus, see \cite{Je,Je2}. Emch studies curves with a prescribed system of foci and computes the foci of a special cubic, see \cite{Emch}.

More recently, the subject has been revisited from different perspectives, including geometric approaches to real foci
(Langer--Singer \cite{LS})  and connections with complex polynomials (Casas-Alvero \cite{CA1, CA2}), as well as reconstruction results in special
families (Poncelet--Darboux \cite{Hunziker} and Kippenhahn-type curves \cite{Ki}).

However, the deformation-theoretic problem of describing families of curves with the same focal divisor does not appear to have been systematically studied. In particular, we address the following question:

\begin{quote}
Which plane algebraic curves admit nontrivial confocal deformations?
\end{quote}

The main conceptual contribution of this paper is the introduction of a focal map on equiclassical families and the interpretation of confocal families as its fibers. This allows us to study confocality using deformation theory.  The key point is that, over the reals, the infinitesimal condition of fixing the focal divisor is  the condition of vanishing  on the union of the two conjugate isotropic lines
\[
Y=\{u^2+v^2=0\}=L_+\cup L_-.
\]
Consequently the relevant cohomological object is the focal adjoint bundle
\[
\B_D=\A_D\otimes \nu^*\mathcal O_D(-1)
\]
on the normalization of the dual curve.  Under the usual equiclassical tangent-space hypotheses, the kernel of the differential of the focal map is identified with
\[
H^0(\widetilde D,\B_D).
\]
For rational nodal-cuspidal dual curves this gives the maximal-rank formula
\[
\operatorname{rank}(d\Phi_G)=\min(2c,c+d+1),
\]
and the expected local dimension of the confocal locus is
\[
\max(0,d-c+1).
\]
The general maximal-rank statement is formulated below as a Brill--Noether type conjecture for \(\B_D\).

\medskip

The organization of the paper is as follows. Section~2 collects the classical preliminaries on isotropic lines, foci, dual curves, and focal divisors. Section~3 introduces the focal map on equiclassical families, identifies the infinitesimal kernel with the cohomology of the focal adjoint bundle, proves the rational maximal-rank statement, and formulates the main Brill--Noether type conjecture. Section~4 discusses the principal known examples and non-examples of confocal families, including maximal-class rational curves, curves of minimal class, Siebeck curves, and Poncelet curves.

The appendix, written by Eugenii Shustin from Tel Aviv University after reading a preliminary version of this text, contains additional comments on Conjecture~\ref{conj:main} and proves it in a range not covered by Proposition~\ref{prop:BN-reformulation}.

\section{Classical preliminaries}

Throughout the paper,  a \emph{tangent} to curve at a point $p$ is a line whose intersection number with the curve at $p$ is strictly greater than the multiplicity of $p$ as a point on the curve. Hence, the tangents are the lines corresponding to the points on the dual curve.
In particular, not every line passing through a singular point is considered to be a  tangent. 

It is well-known that every circle intersects the line at infinity in two imaginary points $p_\pm=(1:\pm i:0)$. These points are called the \emph{circular points at infinity}.  
In the case of a \emph{real} curve, the two circular points at infinity are related to the curve by an analogous projective construction, namely via isotropic lines and tangents through these points.  Unlike the case of a circle, the curve need not pass through the circular points; the analogy is only at the level of the projective construction. Lines passing through the circular points, other than the line at infinity, are called \emph{circular lines} or \emph{isotropic lines}. 

If from a focus of a conic we draw two tangents to the curve, these pass respectively through the two circular points at infinity. This fact led Pl\"ucker \cite {Pl}*{p.~85} to his generalization of the foci of curves of higher degree. His definition, as stated by Cayley in \cite {Cay}*{p.~515}, is as follows: ``If from each of the circular points at infinity \ldots tangents are drawn to the curve, the intersections of each tangent from the one point with each tangent from the other point are the foci of the curve".

This definition not only gives real foci but also imaginary foci corresponding to the imaginary intersections of the tangents from the circular points to the curve.\footnote{With this definition the ellipse, for instance, has four foci, two real and two imaginary.}

If the circular points $p_\pm$ do not lie on the curve, the finite intersections of ordinary tangents (tangents having ordinary contact) from $p_+$ and $p_-$ are called \emph{ordinary foci}. If $p_+$ and $p_-$ are on the curve, the finite intersections of the tangents to the curve at $p_+$ and $p_-$ are called \emph{singular foci} of the curve. The intersections of the remaining tangents from $p_+$ and $p_-$ to the curve are ordinary foci. If one of the two isotropic tangents is a tangent at a circular point lying on the curve and the other is not, then their intersection is counted among the ordinary foci.  The term singular focus is reserved for the intersection points arising from tangents at the circular points themselves.

\medskip
The \emph{class} of a plane curve is the number of tangents that can be drawn to the curve from a general point. Equivalently, the class is the degree of the dual curve. Consider a real curve of class $c$. Then $c$ tangents can typically be drawn to the curve from $p_+$ and $p_-$. These isotropic tangents will intersect in $c^2$ points. If such a tangent from $p_+$ is given by $x+iy=(a+ib)z$, then $x-iy=(a-ib)z$ is a tangent from $p_-$. These two tangents intersect in a real point $(a:b:1)$, so there are $c$ real foci. There are no more than $c$ real foci, for no tangent from $p_\pm$ can contain more than one real point. Hence a curve of class $c$ has in general $c$ real and $c^2-c$ unreal foci \cite{Hi}*{p.~69}. 

The number $c$ of real foci will be decreased if the curve is tangent  to the line at infinity or passes through the circular points.  Another possible drop occurs when a double tangent of the curve passes through one of the circular points. (More information how the number of real foci decreases can be found in  \cite{Ri}.)  

\section{Which degree-$d$ curves can admit confocal families?}
We start with the consideration of general plane curves of given degree $d$. 
The complete linear system of plane curves of degree $d$ has dimension
\[
N_d=\dim \mathbb P H^0(\mathbb P^2,\mathcal O_{\mathbb P^2}(d))=\frac{d(d+3)}{2}.
\]
For a given smooth curve of degree $d$, the class is $c=d(d-1)$. 
If $G(u,v,w)=0$ is the equation of the dual curve,  the focal data are encoded by the two degree-$c$ polynomials
\[
G_+(w):=G(-1,-i,w) \quad \text{and} \quad G_-(w):=G(-1,i,w).
\]
Indeed, if $x+iy=r_+z$ (resp.  $x-iy=r_-z$) is the equation of an isotropic line through $p_+$ (resp. $p_-$), then the line is tangent to the curve iff $r_+$ (resp. $r_-$) is a root of $G_+$ (resp. $G_-$).
The intersection
\[\left(\frac{1}2 (r_++r_-):-\frac{i}2(r_+-r_-):1\right)
\]
of two such lines is a focus of the curve. To ask that a given point $(f_+:f_-:1)$ be a focus, amounts to fixing one root $r_+$ of $G_+$ and one root $r_-$ of $G_-$:
\[r_+=f_+ +if_- \text{ and } r_-=f_+ - if_-.\]
Hence, to ask for a fixed focus, amounts to imposing two conditions on the dual curve, hence on the original curve.

The focus is real iff the two roots $r_+$ and $r_-$ are complex conjugate. 
If the curve is real, then also its dual is real, so that $G$ can be chosen to have real coefficients. In this case, $G_-$ is the complex conjugate of $G_+$, so if $r_+$ is a root of $G_+$, then $r_-:=\overline{r_+}$ is a root of $G_-$, and hence the corresponding focus is a real point. More precisely, it follows from the above that if $r_+=a+ib$ is a root of $G_+$, then $(a:b:1)$ is a real focus of the curve.

It follows that a general real curve of degree $d$ has exactly $c=d(d-1)$ finite real foci, counted with multiplicity. Indeed, the isotropic tangents from $p_+$ are finite and simple for a general curve, and their conjugates from $p_-$ give the corresponding real points. It cannot have more, since two isotropic lines that are not conjugate cannot intersect in a real point, and each isotropic line contains exactly one real point.

A naive dimension count then gives that the set of curves with $c$ fixed foci should have dimension
\[
N_d-2c=\frac{d(d+3)}{2}-2d(d-1)=-\frac{1}{2}d(3d-7)
\]
since fixing \(c\) foci is expected to impose \(2c\) real conditions.
Now  $N_d-2c$ is negative for $d\ge 3$. So we should expect that a general smooth curve of degree $d\ge 3$ does not belong to a positive-dimensional confocal family; among smooth curves, only conics have room for such a family.

More generally, let $V_{d,g,c}$ denote the equiclassical family of irreducible plane curves of degree $d$, genus $g$, and class $c$ \cite{Sh98}*{p.~196}. Its expected dimension is
\[
d-g+c+1.
\]
Fixing $c$ foci should again impose $2c$ conditions, so the expected dimension of the confocal locus is
\[
d-g+c+1-2c=d-g-c+1.
\]
This suggests that, generically, positive-dimensional confocal families can occur only when the inequality 
\[
c\le d-g
\]
holds. Very exceptional positive-dimensional families are not excluded by this dimension count. 
In particular, curves admitting confocal families should be very special singular curves of unusually small class.
\medskip

Shustin proved that if $2g\le d+1$, then $V_{d,g,c}$ is an irreducible variety of dimension $d-g+c+1$, and a general point corresponds to a curve with only nodes and cusps as singularities \cite{Sh98}*{Cor.~1.2, p.197}. A classical example where irreducibility fails, was given by Zariski:  $V_{6,4,12}$ consists of two $15$-dimensional irreducible families of six-cuspidal sextics  \cite{Za}*{p.~223}.

\begin{remark}
Thus the basic existence problem with prescribed real foci is naturally a problem about class rather than degree: every $c$-tuple of distinct real points occurs as the set of real foci of some real curve of class $c$, while the rational case considered below is much more rigid.
\end{remark}

The rational case is the most favorable one. If $g=0$, the inequality becomes $c\le d$. Thus rational curves are the natural first case to study systematically, although in principle one could also look for singular curves of higher genus with exceptionally small class.

A particularly transparent test occurs for rational nodal-cuspidal curves. Suppose $C$ is a rational plane curve of degree $d$ with $\delta$ nodes and $\kappa$ cusps. Then
\[
\delta+\kappa=\frac{(d-1)(d-2)}{2},
\]
and the Pl\"ucker formula gives
\[
c=d(d-1)-2\delta-3\kappa=2(d-1)-\kappa.
\]
Hence the expected dimension of the confocal locus becomes
\[
d+1-c=d+1-(2d-2-\kappa)=3-d+\kappa.
\]
Therefore a rational nodal-cuspidal curve can be expected to admit a positive-dimensional confocal family only if
\[
\kappa\ge d-2.
\]
For instance, nodal rational curves ($\kappa=0$) should give a positive-dimensional family only for $d=2$, a finite confocal locus for $d=3$, and no positive-dimensional confocal family for $d\ge 4$. Each cusp increases the expected confocal dimension by one.

The preceding paragraph concerns the singularities of the original curve $C$.  In the deformation-theoretic result below we work instead with the dual curve
$D=C^\vee\subset (\mathbb P^2)^\vee$ and impose the nodal-cuspidal hypothesis on $D$.  Although $C$ and $D$ have the same normalization in the birational duality situation, the condition of being nodal-cuspidal is not literally self-dual and should be read on the curve under discussion.

Let $\mathbb P(H^0((\mathbb P^2)^\vee,\mathcal O_{(\mathbb P^2)^\vee}(c))\cong \mathbb P^{\frac{1}2 c(c+3)}$ denote the set of curves in $(\bP^2)^\vee$ of degree $c$, with coordinates given by the coefficients of the polynomials. Let $W_{c,g,d}\subset \mathbb P^{\frac{1}2 c(c+3)}$ denote the equiclassical family of curves of degree $c$, genus $g$, and class $d$. We shall work on the open subset on which the coefficient of $w^c$ in $G$ is nonzero. After dividing by this coefficient, we are on the affine chart where the coefficient of $w^c$ is $1$; in particular, the focal divisor considered below is finite in this affine chart.  On this open subset consider the map
\[
\Phi\colon W_{c,g,d} \to \mathbb C[w]_{\le c-1}\cong \bC^{c}, \quad
\Phi(G):=G_+(w)-w^c,
\]
where
\[
G_+(w)=G(-1,-i,w).
\]
To fix the coefficients of $G_+$ is equivalent to fixing its finite roots, counted with multiplicity.  We call the divisor of roots of $G_+$ the \emph{focal divisor} in the chosen affine chart.  For a real curve this divisor is conjugate to the corresponding divisor of roots of $G_-$, and it records the finite real foci counted with multiplicity.

\begin{definition}
Let $W_{c,g,d}(\bR)\subset W_{c,g,d}$ be the subset of real polynomials. The \emph{focal map} is the restriction
\[
\Phi\colon W_{c,g,d}(\bR)\to \bR^{2c},
\]
where $\mathbb C[w]_{\le c-1}$ is regarded as a real vector space.
\end{definition}
If $G$ is real, then $G_-(w):=G(-1,i,w)$ is the complex conjugate of $G_+(w)$. Thus each root of $G_+(w)$ gives one real focus, counted with the same multiplicity as a root of the focal divisor.  A fiber of $\Phi$ therefore consists of dual curves whose corresponding real curves have fixed focal divisor; when all roots are simple, this is the same as fixing the set of real foci.

It will be important below that the real differential of $\Phi$ is controlled not by one isotropic line alone, but by the union of the two conjugate isotropic lines
\[
Y=L_+\cup L_-,\qquad L_+=\{u+iv=0\},\quad L_- =\{u-iv=0\}.
\]
Indeed, for a real infinitesimal deformation $H$ of $G$, the condition that the first variation of $G_+$ vanishes is equivalent to the simultaneous vanishing of $H$ on $L_+$ and on $L_-$. Equivalently, after complexification of the real tangent map, one is restricting to the reducible conic
\[
Y=\{u^2+v^2=0\}.
\]
This elementary point is the source of the shift by two, rather than by one, in the adjoint bundles appearing below.

\begin{prop}\label{prop:focal-fibers}
Let $W\subseteq W_{c,g,d}(\bR)$ be an irreducible component. The locus in $W$ of curves with prescribed focal divisor is a fiber of $\Phi$. Hence, at a smooth point $G\in W$, the Zariski tangent space to the local confocal locus is
\[
\ker(d\Phi_G).
\]
If $W$ has the expected real dimension $c+d-g+1$ and $d\Phi_G$ has maximal rank, then
\[
\operatorname{rank}(d\Phi_G)=\min(2c,c+d-g+1)
\]
and the expected local dimension of the confocal locus through $G$ is
\[
\max(0,d-g-c+1).
\]
In particular, positive-dimensional local confocal deformations are expected only when
\[
c\le d-g.
\]
\end{prop}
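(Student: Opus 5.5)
The plan is to treat the statement as essentially formal bookkeeping around the definition of $\Phi$. There are three steps.

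\emph{Fibers.} First we show that prescribing the focal divisor is exactly a fiber condition. On the chosen affine chart the coefficient of $w^c$ in $G$ is normalized to $1$. Hence $G_+(w)$ is monic of degree $c$, and a monic polynomial is determined by its roots counted with multiplicity, and conversely. So fixing the focal divisor is equivalent to fixing $\Phi(G)=G_+(w)-w^c\in\bC[w]_{\le c-1}$. By definition, the locus in $W$ with prescribed focal divisor is therefore $\Phi^{-1}(\Phi(G))\cap W$. For real $G$ the polynomial $G_-$ is the conjugate of $G_+$, so nothing further is imposed; fixing the $c$ complex coefficients of $\Phi$ amounts to $2c$ real conditions. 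The statement about the Zariski tangent space is then standard. At a smooth point $G$ of $W$, the fiber is cut out by the $2c$ real coordinate functions of $\Phi-\Phi(G)$. Its Zariski tangent space is the common kernel of their differentials inside $T_GW$, which is $\ker(d\Phi_G)$. We may also record the observation made before the statement: a real tangent vector $H$ lies in the kernel iff $H|_{L_+}=0$, equivalently (by conjugation) iff $H$ vanishes on $Y$. This is not needed for the proposition itself, but it identifies the kernel concretely.

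\emph{Rank and dimension.} If $\dim_\bR W=c+d-g+1$, then $d\Phi_G\colon T_GW\to\bR^{2c}$ is a linear map from a space of dimension $c+d-g+1$ to one of dimension $2c$. Maximal rank means the rank equals $\min(2c,\,c+d-g+1)$. Rank--nullity then gives
\[
\dim\ker d\Phi_G=(c+d-g+1)-\min(2c,c+d-g+1)=\max(0,\,d-g-c+1).
\]
Upper semicontinuity of fiber dimension justifies using this as the expected local dimension: near $G$ the fiber has dimension at most that of its Zariski tangent space. Under maximal rank, the rank is locally constant on an open set where $W$ is smooth. If in addition $2c\le c+d-g+1$, then $\Phi$ is a submersion near $G$, and the fiber is a smooth manifold of exactly that dimension by the real implicit function theorem. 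The positivity criterion follows immediately, since $d-g-c+1>0$ iff $c\le d-g$.

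\emph{Where care is needed.} The only genuinely delicate point is the word ``expected''. We cannot assert that $d\Phi_G$ has maximal rank; that is a hypothesis here, to be verified later via $H^0(\widetilde D,\B_D)$. So the plan is to state the dimension conclusion conditionally and not attempt more. A second subtlety is the behaviour at points where the fiber is nonreduced or $W$ is singular, which is excluded by restricting to smooth points $G\in W$.
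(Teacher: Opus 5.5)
Your proposal is correct and follows the paper's proof: the monic normalization makes a prescribed focal divisor equivalent to a prescribed value of $\Phi$, the tangent space to the fiber at a smooth point is $\ker(d\Phi_G)$, and rank--nullity with source dimension $c+d-g+1$ and target dimension $2c$ gives the rank and dimension formulas. Your extra remarks go slightly beyond what the paper writes but are consistent with it: you bound the local dimension by the Zariski tangent space, and you invoke the implicit function theorem in the submersive case $2c\le c+d-g+1$.
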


\begin{proof}
Two real curves have the same focal divisor exactly when their dual equations have the same polynomial $G_+(w)$, up to the normalization used above, because the roots of $G_+(w)$ are precisely the numbers $x+iy$ corresponding to the real foci $(x:y:1)$, counted with multiplicity. Thus the locus of curves with prescribed focal divisor is a fiber of $\Phi$.

At a smooth point the tangent space to the fiber is the kernel of the differential. If the differential has maximal rank, its rank is the smaller of the dimension of the source and the dimension $2c$ of the target. Since the expected dimension of $W_{c,g,d}$ is $c+d-g+1$, the asserted formula follows.
\end{proof}

We now introduce the adjoint line bundles which control this differential.  The first one is a class adjoint bundle.  The second, obtained from it by subtracting one hyperplane section, is the bundle relevant for confocal deformations.

We shall use the following standard deformation-theoretic convention.  At a point $D=\{G=0\}$ of an equiclassical family, the phrase \emph{the usual equiclassical tangent-space description holds} means that the complex Zariski tangent space to the equiclassical stratum is represented by
\[
T_DW_{\mathbb C}\simeq H^0(\mathcal I_{Z^{\rm ec}}(c))/\langle G\rangle,
\]
where $Z^{\rm ec}$ is the equiclassical scheme of $D$.  On the affine chart where the coefficient of $w^c$ is normalized to be $1$, this is represented by sections of $H^0(\mathcal I_{Z^{\rm ec}}(c))$ with zero $w^c$-coefficient.

\begin{definition}\label{def:class-adjoint-bundle}
Let $D\subset (\bP^2)^\vee$ be a reduced curve of degree $c$, let $\nu:\widetilde D\to D$ be the normalization, and let $Z^{\rm ec}$ be an equiclassical scheme of $D$ for which the usual tangent-space description holds. Assume also that the equiclassical adjoint sequence holds for $D$. The \emph{class adjoint line bundle} $\A_D$ is the line bundle on $\widetilde D$ defined by
\[
0\to \mathcal O_{(\bP^2)^\vee}(-1)
\to \mathcal I_{Z^{\rm ec}}(c-1)
\to \nu_*\A_D\to 0.
\]
Equivalently, $\A_D$ is obtained by pulling the torsion-free sheaf $\mathcal I_{Z^{\rm ec}}(c-1)\otimes \mathcal O_D$ to the normalization and removing torsion; its push-forward is the last term in the displayed exact sequence.

The \emph{focal adjoint line bundle} is
\[
\B_D:=\A_D\otimes \nu^*\mathcal O_D(-1).
\]
Equivalently, it is determined by the shifted adjoint sequence
\[
0\to \mathcal O_{(\bP^2)^\vee}(-2)
\to \mathcal I_{Z^{\rm ec}}(c-2)
\to \nu_*\B_D\to 0.
\]
\end{definition}

\begin{remark}\label{rem:nodal-cuspidal-adjoint}
In the nodal-cuspidal case the definition is completely explicit. If $p_i',p_i''\in \widetilde D$ are the two preimages of the node $p_i$, and if $q_j\in \widetilde D$ is the preimage of the cusp $q_j$, then
\[
\A_D=
\mathcal O_{\widetilde D}\left(
\nu^*\mathcal O_D(c-1)
-\sum_i(p_i'+p_i'')-3\sum_jq_j
\right),
\]
and
\[
\B_D=
\mathcal O_{\widetilde D}\left(
\nu^*\mathcal O_D(c-2)
-\sum_i(p_i'+p_i'')-3\sum_jq_j
\right).
\]
The correction divisor has degree $2\delta+3\kappa$, which is the Pl\"ucker drop in the class for nodes and cusps. Thus
\[
\deg \A_D=c(c-1)-2\delta-3\kappa=d,
\qquad
\deg \B_D=d-c.
\]
This is why one should not identify the symbol $\deg Z^{\rm ec}$ with the ordinary scheme-theoretic length of $Z^{\rm ec}$ in degree computations: the relevant number here is the adjoint divisor degree on the normalization.
\end{remark}

\begin{prop}\label{prop:kernel-B}
Let $D=\{G=0\}\subset (\bP^2)^\vee$ be a real reduced curve of degree $c$, genus $g$, and class $d$, corresponding to a smooth point of an equiclassical component $W\subset W_{c,g,d}(\bR)$. Assume that the usual equiclassical tangent-space description holds at $D$, that the equiclassical adjoint sequence holds, and that the equiclassical scheme of $D$ is disjoint from the reducible conic
\[
Y=\{u^2+v^2=0\}=L_+\cup L_-.
\]
Then the real tangent kernel of the focal map is naturally identified with the real part of
\[
H^0(\widetilde D,\B_D).
\]
Consequently,
\[
\dim_{\bR}\ker(d\Phi_G)=h^0(\widetilde D,\B_D)
\]
and, if $W$ is smooth of expected dimension at $G$,
\[
\operatorname{rank}(d\Phi_G)=c+d-g+1-h^0(\widetilde D,\B_D).
\]
\end{prop}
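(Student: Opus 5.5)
We identify the kernel in three steps. First we describe tangent vectors via the equiclassical tangent-space description, then we translate the focal condition into divisibility by $u^2+v^2$, and finally we pass to the normalization using the shifted adjoint sequence of Definition~\ref{def:class-adjoint-bundle}.

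By the usual equiclassical tangent-space description, a complex tangent vector at $G$ is represented by a form
\[
H\in H^0(\mathcal I_{Z^{\rm ec}}(c))
\]
with zero $w^c$-coefficient. Such a representative is unique, since $G$ has $w^c$-coefficient $1$. A real tangent vector corresponds to a real such $H$. The differential is
\[
d\Phi_G(H)=H(-1,-i,w),
\]
which is a polynomial in $w$ of degree at most $c-1$. This vanishes exactly when $H$ vanishes identically on $L_+$. Since $H$ is real, we have
\[
H(-1,i,w)=\overline{H(-1,-i,\bar w)},
\]
so $H$ then also vanishes on $L_-$. The lines $L_\pm$ are distinct, so $H$ is divisible by $(u+iv)(u-iv)=u^2+v^2$. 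Writing $H=(u^2+v^2)K$, the form $K$ has degree $c-2$, is real, and is uniquely determined by $H$.

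Next we show $K$ lies in the right ideal. Since $Z^{\rm ec}$ is disjoint from $Y$, the form $u^2+v^2$ is a unit in every local ring of $Z^{\rm ec}$. Hence $H\in\mathcal I_{Z^{\rm ec}}$ if and only if $K\in\mathcal I_{Z^{\rm ec}}$, so $K\in H^0(\mathcal I_{Z^{\rm ec}}(c-2))$. The conditions on $K$ do not involve $w^c$ at all: any $K$ gives an $H$ whose $w^c$-coefficient is zero, because $u^2+v^2$ has no pure $w^2$ term. This gives a real-linear isomorphism
\[
\ker(d\Phi_G)\cong H^0(\mathcal I_{Z^{\rm ec}}(c-2))_{\bR}.
\]
Conversely, every such $K$ yields $H=(u^2+v^2)K$ in the tangent space with $d\Phi_G(H)=0$.

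Then we pass to $\widetilde D$. Take global sections of the shifted adjoint sequence
\[
0\to\mathcal O(-2)\to\mathcal I_{Z^{\rm ec}}(c-2)\to\nu_*\B_D\to0.
\]
Since $H^0(\mathcal O(-2))=H^1(\mathcal O(-2))=0$ on $\bP^2$, we get
\[
H^0(\mathcal I_{Z^{\rm ec}}(c-2))\cong H^0(\widetilde D,\B_D).
\]
This isomorphism is compatible with complex conjugation because $D$, $Z^{\rm ec}$ and $\nu$ are real. Restricting to real points, the real dimension of the real part equals the complex dimension $h^0(\B_D)$. The rank formula then follows from rank–nullity, using $\dim_{\bR} T_GW=c+d-g+1$ at a smooth point of expected dimension.

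The main obstacle is the middle step, namely justifying that $u^2+v^2$ can be cancelled inside $\mathcal I_{Z^{\rm ec}}$. This is where the hypothesis that $Z^{\rm ec}$ is disjoint from $Y$ is essential. It also requires that the tangent-space description is by forms modulo $G$, with the normalization $w^c$-coefficient $=0$ removing the ambiguity, rather than by sections on $D$ only. We must check that the chosen representative $H$ is the one that becomes divisible. This works because the normalization fixes $H$ uniquely and $G_+$ has $w^c$-coefficient $1$, so no multiple of $G$ can be absorbed into the variation of $G_+$.
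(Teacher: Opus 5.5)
Your proposal is correct and follows the paper's proof step for step. You take the normalized representative $H$ with zero $w^c$-coefficient and show that $H(-1,-i,w)\equiv 0$ means $H$ vanishes on $L_+$, hence by reality on $L_-$. This gives $H=(u^2+v^2)Q$, and disjointness of $Z^{\rm ec}$ from $Y$ puts $Q$ in $H^0(\mathcal I_{Z^{\rm ec}}(c-2))$. You then conclude with the shifted adjoint sequence and the vanishing of $H^0$ and $H^1$ of $\mathcal O(-2)$, exactly as the paper does. Your extra remarks only make explicit points the paper leaves implicit: the uniqueness of the normalized representative, and compatibility with complex conjugation, which gives that the real part has real dimension $h^0(\widetilde D,\mathcal B_D)$.
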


\begin{proof}
Work on the affine chart in which the coefficient of $w^c$ in $G$ is equal to $1$. By the tangent-space hypothesis, an infinitesimal real equiclassical deformation is represented by a real homogeneous polynomial $H$ of degree $c$ satisfying the linear equiclassical conditions, i.e. by a section of $H^0(\mathcal I_{Z^{\rm ec}}(c))$ with zero $w^c$-coefficient.

The condition that $H$ lie in the kernel of $d\Phi_G$ is $H(-1,-i,w)\equiv 0$. Since $H$ has zero $w^c$-coefficient in the normalized affine chart, this identity on the affine coordinate $w$ is the same as the projective condition $H|_{L_+}=0$, including the point at infinity of $L_+$. Since $H$ has real coefficients, this is equivalent to $H|_{L_-}=0$ as well. Hence $H$ vanishes on $Y=L_+\cup L_-$, whose equation is $u^2+v^2=0$. Therefore
\[
H=(u^2+v^2)Q
\]
for some homogeneous polynomial $Q$ of degree $c-2$.
Because $Z^{\rm ec}$ is disjoint from $Y$, the condition $H\in H^0(\mathcal I_{Z^{\rm ec}}(c))$ is equivalent to
\[
Q\in H^0(\mathcal I_{Z^{\rm ec}}(c-2)).
\]
Conversely, every such $Q$ gives an infinitesimal deformation in the kernel.
Thus the kernel is identified with the real part of $H^0(\mathcal I_{Z^{\rm ec}}(c-2))$.

By the shifted adjoint sequence in Definition~\ref{def:class-adjoint-bundle},
\[
0\to\mathcal O_{(\bP^2)^\vee}(-2)
\to \mathcal I_{Z^{\rm ec}}(c-2)
\to \nu_*\B_D\to0,
\]
and the vanishings
\[
H^0((\bP^2)^\vee,\mathcal O(-2))=H^1((\bP^2)^\vee,\mathcal O(-2))=0,
\]
taking cohomology gives
\[
H^0(\mathcal I_{Z^{\rm ec}}(c-2))\cong H^0(\widetilde D,\B_D).
\]
The dimension and rank formulas follow.
\end{proof}

\begin{thm}\label{thm:rational-rank}
Let $W\subseteq W_{c,0,d}(\bR)$ be an irreducible component of the real equiclassical family of rational curves in $(\bP^2)^\vee$ of degree $c$ and class $d$. Assume that $W$ is smooth of the expected dimension $c+d+1$ at a general point, that the usual equiclassical tangent-space description holds there, and that a general point of $W$ corresponds to a nodal-cuspidal curve $D\subset(\mathbb P^2)^\vee$ whose singular scheme is disjoint from $Y=\{u^2+v^2=0\}$. Thus the nodal-cuspidal hypothesis in this theorem is imposed on the dual curve $D$, not on its dual curve in the original plane. Then, for a general smooth point $G\in W$,
\[
\dim_{\bR}\ker(d\Phi_G)=\max(0,d-c+1)
\]
and
\[
\operatorname{rank}(d\Phi_G)=\min(2c,c+d+1).
\]
Equivalently, the expected local dimension of the confocal locus through $G$ is
\[
\max(0,d-c+1).
\]
\end{thm}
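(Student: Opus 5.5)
The plan is to reduce everything to Proposition~\ref{prop:kernel-B} and then compute $h^0(\widetilde D,\B_D)$ on $\widetilde D\cong\bP^1$. At a general point $G\in W$ the curve $D$ is nodal-cuspidal, and its equiclassical scheme is supported on the nodes and cusps. Hence the hypothesis that the singular scheme misses $Y$ gives the disjointness needed in Proposition~\ref{prop:kernel-B}. I would take the equiclassical adjoint sequence in the nodal-cuspidal case as the standard adjoint sequence, with the adjoint conditions at nodes and at the cusps' tangent data as in Remark~\ref{rem:nodal-cuspidal-adjoint}. With that, Proposition~\ref{prop:kernel-B} gives
\[
\dim_{\bR}\ker(d\Phi_G)=h^0(\widetilde D,\B_D),\qquad \operatorname{rank}(d\Phi_G)=c+d+1-h^0(\widetilde D,\B_D).
\]

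Next, Remark~\ref{rem:nodal-cuspidal-adjoint} gives $\deg\B_D=d-c$. Since $g=0$, we have $\widetilde D\cong\bP^1$, so $\B_D\cong\mathcal O_{\bP^1}(d-c)$ and
\[
h^0(\B_D)=\max(0,d-c+1).
\]
On $\bP^1$ every line bundle has maximal-rank cohomology, so no generality argument is needed at this step. This is the key simplification in the rational case; it is exactly what fails for $g>0$ and leads to the Brill--Noether type conjecture. Substituting this value, the rank is $c+d+1-\max(0,d-c+1)=\min(c+d+1,2c)$. The confocal locus is the fiber of $\Phi$ by Proposition~\ref{prop:focal-fibers}, so its local dimension is $\max(0,d-c+1)$, and when the fiber is smooth at $G$ its tangent space is the kernel.

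The main obstacle is the bookkeeping in Proposition~\ref{prop:kernel-B}: the real kernel should be the real part of a complex space of complex dimension $h^0$, giving real dimension $h^0$. To settle this I would use the real structure. Complex conjugation acts on $H^0(\mathcal I_{Z^{\rm ec}}(c-2))$ because $D$ and $Z^{\rm ec}$ are real and $u^2+v^2$ is real. The fixed subspace therefore has real dimension equal to the complex dimension. I would also check the chart normalization: the condition of zero $w^c$-coefficient is automatic for $H=(u^2+v^2)Q$, so it imposes no extra condition.

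A secondary point is that $W$ is smooth of dimension $c+d+1$ at a general point. This is assumed in the theorem, so the rank formula follows by rank--nullity on $T_GW$.
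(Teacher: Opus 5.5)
Your proposal is correct and follows the paper's own proof. You use Proposition~\ref{prop:kernel-B} to identify the kernel with $H^0(\widetilde D,\B_D)$. You then use $\deg\B_D=d-c$ from Remark~\ref{rem:nodal-cuspidal-adjoint} to get $\B_D\cong\mathcal O_{\mathbb P^1}(d-c)$, and conclude by rank--nullity.

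Your extra checks are points the paper absorbs into Proposition~\ref{prop:kernel-B} or leaves implicit:
\begin{itemize}
\item the real structure gives a real dimension equal to the complex one;
\item the $w^c$-coefficient of $(u^2+v^2)Q$ vanishes automatically;
\item the adjoint sequence is standard in the nodal-cuspidal case.
\end{itemize}
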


\begin{proof}
Let $D=\{G=0\}$ and let $\nu:\widetilde D\to D$ be the normalization. Since $D$ is rational, $\widetilde D\cong \mathbb P^1$. By Proposition~\ref{prop:kernel-B}, the kernel of the focal differential is the real part of $H^0(\widetilde D,\B_D)$.

In the nodal-cuspidal case, Remark~\ref{rem:nodal-cuspidal-adjoint} gives
\[
\deg \B_D=d-c.
\]
Therefore
\[
\B_D\cong \mathcal O_{\mathbb P^1}(d-c),
\]
and hence
\[
h^0(\widetilde D,\B_D)=h^0(\mathbb P^1,\mathcal O_{\mathbb P^1}(d-c))=
\max(0,d-c+1).
\]
Since the expected dimension of $W_{c,0,d}$ is $c+d+1$, the rank formula follows from Proposition~\ref{prop:kernel-B}:
\[
\operatorname{rank}(d\Phi_G)
=c+d+1-\max(0,d-c+1)
=\min(2c,c+d+1).
\]
\end{proof}

\begin{remark}\label{rem:rational-correction}
The maximal-rank conclusion in Theorem~\ref{thm:rational-rank} means maximal rank in the ordinary sense: the rank is the smaller of the source dimension and the target dimension. Thus it is not always equal to $2c$. For example, if $c>d+1$, then the source has dimension $c+d+1<2c$, so rank $2c$ is impossible.
\end{remark}

The preceding discussion shows that the cohomological object relevant to confocal deformations is $\B_D$, not $\A_D$ itself. The bundle $\A_D$ records the class adjunction, while the focal map asks for vanishing along the two conjugate isotropic lines and therefore subtracts one further hyperplane section.

\begin{conjecture}\label{conj:main}
Let $W$ be an irreducible component of the real equiclassical family of curves in $(\bP^2)^\vee$ of degree $c$, genus $g$, and class $d$, and let $G$ be a general smooth point of $W$. Put $D=\{G=0\}$. Assume that the usual equiclassical tangent-space description and adjoint sequences hold at $D$, that the equiclassical scheme of $D$ is disjoint from
\[
Y=\{u^2+v^2=0\},
\]
and that the focal adjoint bundle satisfies $\deg \B_D=d-c$; this holds, for instance, in the nodal-cuspidal case. Then
\[
h^0(\widetilde D,\B_D)=\max(0,d-g-c+1).
\]
If, in addition, $W$ is smooth of the expected real dimension $c+d-g+1$ at $G$, then Proposition~\ref{prop:kernel-B} implies that the focal map has maximal rank at $G$:
\[
\operatorname{rank}(d\Phi_G)=\min(2c,c+d-g+1).
\]
\end{conjecture}

\begin{prop}\label{prop:BN-reformulation}
Assume $\deg \B_D=d-c$, and set $b=d-c$. By Riemann--Roch, Conjecture~\ref{conj:main} is equivalent to the following Brill--Noether type alternative:
\begin{enumerate}
\item if $b\le g-1$, then $H^0(\widetilde D,\B_D)=0$;
\item if $b\ge g$, then $H^1(\widetilde D,\B_D)=0$.
\end{enumerate}
In particular, the conjectural maximal-rank statement is automatic whenever $b<0$, and the nonspeciality part is automatic whenever $b\ge 2g-1$.
\end{prop}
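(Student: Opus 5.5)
The proof is Riemann--Roch on $\widetilde D$ followed by a case split on $b=d-c$ relative to $g$. Riemann--Roch for a line bundle of degree $b$ on a smooth curve of genus $g$ gives
\[
h^0(\widetilde D,\B_D)-h^1(\widetilde D,\B_D)=b-g+1=d-g-c+1 .
\]
Conjecture~\ref{conj:main} asserts $h^0(\widetilde D,\B_D)=\max(0,b-g+1)$. We compare this with the identity in the two ranges. The rank statement in the conjecture is already a formal consequence of the $h^0$ statement by Proposition~\ref{prop:kernel-B}, so only the $h^0$ statement needs to be reformulated.

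\emph{Case $b\le g-1$.} Here $\max(0,b-g+1)=0$, so the conjecture says exactly $H^0(\widetilde D,\B_D)=0$, which is alternative (1). If $h^0=0$, Riemann--Roch also forces $h^1=g-1-b\ge 0$, so nothing further is hidden.

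\emph{Case $b\ge g$.} Here $\max(0,b-g+1)=b-g+1>0$. Since $h^0=b-g+1+h^1$, the conjecture $h^0=b-g+1$ holds if and only if $h^1(\widetilde D,\B_D)=0$, which is alternative (2).

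The two cases exhaust all integers $b$, so the conjecture is equivalent to the alternative (1)--(2).

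For the final sentence there are two automatic cases.
\begin{itemize}
\item If $b<0$, then $\B_D$ has negative degree on the irreducible smooth projective curve $\widetilde D$, so it has no nonzero sections and $h^0=0$. Since $b<0\le g-1$, this is case (1), and the conjecture holds; the maximal-rank statement then follows from Proposition~\ref{prop:kernel-B}.
\item If $b\ge 2g-1$, Serre duality gives $H^1(\widetilde D,\B_D)\cong H^0(\widetilde D,\omega_{\widetilde D}\otimes\B_D^{-1})^\vee$. The degree of $\omega_{\widetilde D}\otimes\B_D^{-1}$ is $2g-2-b<0$, so this group vanishes. Since $2g-1\ge g$ for $g\ge 0$, this is the nonspecial case (2).
\end{itemize}

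There is no real obstacle. The only points needing care are the edge value $b=g-1$, which lies in case (1) with expected value $0$ and is consistent, and the case $g=0$. When $g=0$ both automatic ranges cover every integer $b$, since $b\ge -1=2g-1$ or $b<0$, which recovers Theorem~\ref{thm:rational-rank}.
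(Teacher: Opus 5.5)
Your proof is correct and follows the paper's argument: Riemann--Roch gives $h^0-h^1=b-g+1$, the split at $b\le g-1$ versus $b\ge g$ identifies the conjecture with the alternative, and the automatic ranges come from negative degree and from Serre duality. The only blemish is that your side inequalities $0\le g-1$ and $2g-1\ge g$ require $g\ge 1$; for $g=0$ the value $b=-1$ lies in case (1) rather than case (2), but both $H^0$ and $H^1$ vanish there, so the conclusion is unaffected.
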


\begin{proof}
Riemann--Roch gives
\[
h^0(\B_D)-h^1(\B_D)=\deg \B_D-g+1=d-c-g+1=b-g+1.
\]
If $b\le g-1$, then $b-g+1\le 0$, and the expected value in Conjecture~\ref{conj:main} is $0$; this is exactly the assertion $H^0(\B_D)=0$. If $b\ge g$, then $b-g+1>0$, and the expected value is $b-g+1$; by Riemann--Roch this is equivalent to $H^1(\B_D)=0$.

If $b<0$, then a line bundle of negative degree has no nonzero sections, so the first alternative holds automatically. If $b\ge 2g-1$, then $\deg(K_{\widetilde D}\otimes \B_D^{-1})=2g-2-b<0$, hence $H^1(\B_D)=0$ by Serre duality.
\end{proof}

\begin{remark}\label{rem:old-A-not-enough}
The nonspeciality of the class adjoint bundle $\A_D$ alone is not sufficient for maximal rank of the focal map. It controls restriction to a single hyperplane section. The real focal condition fixes the restriction to both conjugate isotropic lines, and this is why the correct bundle is
\[
\B_D=\A_D\otimes \nu^*\mathcal O_D(-1).
\]
\end{remark}

\section{Known examples of confocal families}
In each example below we indicate explicitly whether prescribed real foci, or more precisely a prescribed focal divisor when multiplicities occur, determine a positive-dimensional confocal family, a finite set of curves, or a unique curve. Throughout, the dimension is the dimension of the parameter space of real curves with the prescribed focal data.

\subsection{Maximal-class rational curves} 
Let $c=2(d-1)$ general points in $\mathbb R^2$ be given. We ask for rational curves of
degree $d$ and class $c$ having these points as real foci. A simple dimension count gives
the answer.

The space of rational plane curves of degree $d$ has dimension
\[
\frac12 d(d+3)-\frac12 (d-1)(d-2)=3d-1.
\]
Prescribing $c=2(d-1)$ real foci imposes
\[
2c=4(d-1)
\]
real conditions. Hence the expected dimension is
\[
(3d-1)-4(d-1)=3-d.
\]

Therefore:
\begin{itemize}
\item if $d=2$, the expected dimension is $1$, so one obtains a one-dimensional confocal family;
\item if $d=3$, the expected dimension is $0$, so one obtains a finite set of curves for general prescribed foci;
\item if $d\ge 4$, the expected dimension is negative, so for general prescribed foci no such rational curve exists.
\end{itemize}
In particular, among maximal-class rational curves, nontrivial confocal families occur only for conics. Rational curves with many cusps can have smaller class and are covered by Theorem~\ref{thm:rational-rank}.

\subsection{Curves with given foci of minimal class}
The following proposition is essentially stated in \cite{Hi}*{Ch.~5, §1, p.~69}.

\begin{prop}
Let $c\ge 2$, let $P_j=(x_j:y_j:1)\in \bP^2(\bR)$, $j=1,\dots,c$, be distinct points, and put $z_j=x_j+iy_j$. Then there exists a real plane curve of class $c$ whose finite real foci are precisely $P_1,\dots,P_c$. Moreover, class $c$ is the minimal possible. More precisely, the normalized dual equations with these prescribed foci form the affine space of dimension $\binom{c}{2}$ given by
\[
G(u,v,w)=\prod_{j=1}^c (x_ju+y_jv+w)+(u^2+v^2)Q_{c-2}(u,v,w),
\]
where $Q_{c-2}$ is an arbitrary real homogeneous polynomial of degree $c-2$. A nonempty Zariski open subset of this affine space consists of reduced irreducible dual curves of degree $c$; the duals of these curves are real plane curves of class $c$ with the prescribed focal divisor.
\end{prop}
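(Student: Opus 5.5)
The plan is to work entirely on the dual side, using the normalized focal polynomial $G_+(w)=G(-1,-i,w)$ from Section~3, and to split the argument into four steps: the explicit description of the fiber, its dimension, minimality of the class, and genericity of reduced irreducible members.

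\emph{Step 1: a particular solution.} Put $P(u,v,w)=\prod_{j=1}^c (x_ju+y_jv+w)$. This is a real form of degree $c$ whose $w^c$-coefficient is $1$. Substituting $u=-1$, $v=-i$ turns each factor into $w-x_j-iy_j=w-z_j$, so
\[
P_+(w)=\prod_{j=1}^c(w-z_j).
\]
By the discussion preceding the focal map, the roots of $G_+$ for a real $G$ are exactly the numbers $x+iy$ attached to the real foci $(x:y:1)$. Hence $P$ realizes the prescribed focal divisor $z_1+\dots+z_c$, and since the $z_j$ are distinct this divisor has simple roots.

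\emph{Step 2: the whole fiber is an affine space.} Let $G$ be any normalized real form of degree $c$ with $G_+=P_+$, and set $H=G-P$.
\begin{itemize}
\item $H$ is real and has zero $w^c$-coefficient.
\item $H(-1,-i,w)\equiv 0$. Exactly as in the proof of Proposition~\ref{prop:kernel-B}, this means $H|_{L_+}=0$, including the point at infinity of $L_+$.
\item By reality, $H|_{L_-}=0$ as well.
\end{itemize}
Therefore $H=(u^2+v^2)Q$ with $Q$ a real form of degree $c-2$. Conversely, every such $H$ preserves both $G_+$ and the normalization. The space of real forms of degree $c-2$ in three variables has dimension $\binom{c}{2}$, which gives the affine-space statement.

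\emph{Step 3: minimality of the class.} A real curve of class $c'$ has at most $c'$ finite real foci, as shown in Section~2: each isotropic tangent from $p_+$ carries one real point, and there are at most $c'$ such tangents. Having $c$ distinct prescribed real foci therefore forces $c'\ge c$.

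\emph{Step 4: reduced irreducible members are general (main obstacle).} Consider the linear system $\Lambda$ spanned by $P$ and all products $(u^2+v^2)Q$.
\begin{itemize}
\item \emph{Base locus.} It is contained in $\{P=0\}\cap Y$. Both $P$ and $u^2+v^2$ are squarefree, and $P$ is a product of distinct real linear forms while $Y$ consists of two non-real lines, so they share no component. Hence the base locus is finite.
\item \emph{Not composite with a pencil.} For $c\ge3$ the system contains $(u^2+v^2)\cdot|\mathcal O(c-2)|$, whose variable part separates points, so $\Lambda$ is not composite with a pencil. For $c=2$, $\Lambda$ is the pencil $P+t(u^2+v^2)$ of conics with four distinct base points, and I would check directly that only finitely many members are singular.
\end{itemize}
Bertini's theorem then shows that a general complex member of $\Lambda$ is irreducible, and in fact reduced. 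Reducible or nonreduced members form a proper Zariski-closed subset, so their complement is a nonempty Zariski open subset of the complexified affine space. Real points are Zariski dense in a complex affine space, so this open set contains real points, which gives the claimed nonempty real open subset.

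For such a $G$, the curve $D=\{G=0\}$ is reduced, irreducible, of degree $c\ge 2$, and is not a line. Its dual is then a real plane curve whose class equals $\deg D=c$. Because the $w^c$-coefficient is nonzero, the roots of $G_+$ are all finite, and they are precisely $z_1,\dots,z_c$. So the finite real foci of this dual curve are exactly $P_1,\dots,P_c$. The delicate points are the application of Bertini, in particular ruling out composite-with-pencil behaviour in small degree, and the passage from complex genericity to real points; everything else follows directly from the earlier computations.
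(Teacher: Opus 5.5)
Your proof is correct. Steps 1--3 agree with the paper's argument, and your Step~2 also supplies the inclusion ``every normalized real $G$ with $G_+=\prod_j(w-z_j)$ is of the stated form''. The paper's proof leaves that inclusion implicit; it is the divisibility argument from the proof of Proposition~\ref{prop:kernel-B}.

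The real difference is in Step~4. The paper shows that a general member of $\Lambda$ is \emph{nonsingular}. Bertini gives smoothness off the $2c$ base points. At a base point on $L_+$, the simple zero of $G_0|_{L_+}$ gives a tangential derivative that is nonzero for every member; this is where distinctness of the $z_j$ is used. Irreducibility then follows because a smooth plane curve is irreducible.

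You instead use Bertini's irreducibility theorem. This rests on two facts: $\Lambda$ has no fixed components, and for $c\ge 3$ the map defined by $\Lambda$ has two-dimensional image (off $Y$ it projects onto the Veronese map of degree $c-2$). For $c=2$ you check the pencil of conics separately. That check works: no three of the four base points are collinear, so only three members are singular.

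Your route avoids the local analysis at the base points, but it needs a case split and yields only reducedness and irreducibility. The paper's route is uniform in $c\ge 2$ and gives a stronger conclusion: the general dual curve is smooth. So the generic confocal curve of minimal class is the dual of a smooth curve of degree $c$, with degree $c(c-1)$ and genus $\binom{c-1}{2}$. This is why the nonsingular case $G\in W_{3,1,6}$ in the subsequent class-$3$ example is the generic one.

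Finally, you make explicit the passage from a complex Zariski open set to real members, via Zariski density of $\bR^N$ in $\bC^N$. The paper leaves this step tacit.
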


\begin{proof}
Set
\[
G_0(u,v,w):=\prod_{j=1}^c (x_ju+y_jv+w).
\]
Then
\[
G_0(-1,-i,w)=\prod_{j=1}^c(w-x_j-iy_j)=\prod_{j=1}^c(w-z_j),
\]
so $G_0=0$ has the prescribed foci. If
\[
G(u,v,w)=G_0(u,v,w)+(u^2+v^2)Q_{c-2}(u,v,w),
\]
then $G$ is still a real homogeneous polynomial of degree $c$, and since $u^2+v^2=0$ on the isotropic lines $u \pm iv=0$, we get
\[
G(-1,-i,w)=G_0(-1,-i,w)=\prod_{j=1}^c(w-z_j).
\]
Hence every such $G$ has the prescribed finite focal divisor. Let
\[
\Lambda=G_0+(u^2+v^2)H^0((\mathbb P^2)^\vee,\mathcal O(c-2))
\]
be this affine linear system. Its base locus is contained in
\[
Y\cap\{G_0=0\},
\]
which consists of the $2c$ simple points on the two isotropic lines corresponding to the distinct prescribed foci. Away from this finite base locus the moving part $(u^2+v^2)Q_{c-2}$ separates first jets in the usual Bertini sense, so a general member is smooth there. At a base point on $L_+$, the restriction $G_0|_{L_+}$ has a simple zero, because the prescribed numbers $z_j=x_j+iy_j$ are distinct; hence the derivative tangent to $L_+$ is nonzero. The same argument applies on $L_-$. Thus a general member is nonsingular also along the base locus. A smooth plane curve is reduced and irreducible, since two distinct positive-degree components would meet by Bezout and create a singular point. Therefore a nonempty Zariski open subset of choices of $Q_{c-2}$ gives reduced irreducible curves $G=0$ of degree $c$.  The dual of such a general member is therefore an irreducible real curve of class $c$ with the prescribed foci. The space of choices of $Q_{c-2}$ has (affine) dimension
\[
\dim H^0((\mathbb P^2)^\vee,\mathcal O_{(\mathbb P^2)^\vee}(c-2))=\frac{1}2 c(c-1)=\binom{c}2.
\]
Finally, if a curve has class $c'$, then its focal polynomial $G(-1,-i,w)$ has degree $c'$, so it has at most $c'$ real foci counted with multiplicity. Therefore $c$ distinct prescribed foci force $c'\ge c$, and the above construction shows that $c'=c$ is attainable.
\end{proof}

Thus, for arbitrary distinct prescribed real foci, curves of minimal class always exist.
Their normalized dual equations form an affine space of dimension $\binom{c}{2}$, and the actual reduced irreducible class-$c$ curves occupy a nonempty open subset of this space. In particular, for $c\ge 2$
the foci do not determine a unique curve of class $c$ with the given foci. 

\begin{example}[Conics with two prescribed real foci]
Let $P_j=(x_j:y_j:1)$, $j=1,2$, be two points in $\bP^2(\bR)$. Then the real conics with
real foci $P_1$ and $P_2$ form a one-dimensional confocal family. In particular, the foci
do \emph{not} determine a unique conic.

Let
\[
G(u,v,w)=w^2+\alpha_1wv+\alpha_2wu+\alpha_3v^2+\alpha_4vu+\alpha_5u^2
\]
be the equation of the dual conic. The condition that $P_1$ and $P_2$ be the real foci is
\[
G(-1,-i,w)=(w-x_1-iy_1)(w-x_2-iy_2).
\]
Expanding both sides and comparing coefficients gives
\[
\alpha_1=y_1+y_2,\qquad
\alpha_2=x_1+x_2,\qquad
\alpha_4=x_1y_2+x_2y_1,
\]
and
\[
-\alpha_3+\alpha_5=x_1x_2-y_1y_2.
\]
Equivalently,
\[
\alpha_3=\alpha_5-x_1x_2+y_1y_2.
\]
Hence four independent real conditions are imposed on the five parameters
$\alpha_1,\dots,\alpha_5$, leaving exactly one free real parameter.

Therefore the dimension is
\[
5-4=1.
\]
So the prescribed foci determine a one-parameter confocal family of real conics.

By choosing $x_1=1$, $x_2=-1$, $y_1=y_2=0$, we recover the example given by Emch in
\cite{Emch}*{5., p.~160}.
\end{example}

\begin{example}[Curves of class $3$ with three prescribed real foci]
Consider real curves of class $3$, given by the equation of the dual curve
\begin{multline*}
G(u,v,w)=w^3+\alpha_1w^2v+\alpha_2w^2u+\alpha_3wv^2+\alpha_4wvu+\alpha_5wu^2\\
+\alpha_6v^3+\alpha_7v^2u+\alpha_8vu^2+\alpha_9u^3.
\end{multline*}
Fix three real foci $P_j=(x_j:y_j:1)$, $j=1,2,3$. Then the real curves of class $3$ with these
prescribed real foci form a three-dimensional confocal family. Again, the foci do
\emph{not} determine a unique curve.

Indeed, the focal condition is
\[
G(-1,-i,w)=\prod_{j=1}^3(w-x_j-iy_j).
\]
Now
\begin{multline*}
G(-1,-i,w)=w^3-(i\alpha_1+\alpha_2)w^2+(-\alpha_3+i\alpha_4+\alpha_5)w\\
+(i\alpha_6+\alpha_7-i\alpha_8-\alpha_9).
\end{multline*}
Comparing coefficients with
\[
\prod_{j=1}^3(w-x_j-iy_j)
\]
gives three complex equations, hence six real linear conditions on the nine real
parameters $\alpha_1,\dots,\alpha_9$. More explicitly,
\[
\alpha_1=y_1+y_2+y_3,\qquad
\alpha_2=x_1+x_2+x_3,
\]
while the combinations
\[
\alpha_3-\alpha_5,\qquad \alpha_4,\qquad \alpha_6-\alpha_8,\qquad \alpha_7-\alpha_9
\]
are also determined by the prescribed foci.

Thus exactly three real parameters remain free, so the dimension is
\[
9-6=3.
\]
If the curve $G(u,v,w)=0$ is nonsingular, then $G\in W_{3,1,6}$, and the corresponding dual curves are sextics with nine cusps. If $G(u,v,w)=0$ is nodal, then $G\in W_{3,0,4}$, and there is a 2-dimensional set of corresponding confocal tri-cuspidal quartics. If $G(u,v,w)=0$ is cuspidal, then $G\in W_{3,0,3}$, and there is a 1-dimensional set of corresponding confocal cuspidal cubics. According to Roberts \cite{Roberts1901}*{p.~154}, in this last case there are 36 different one-dimensional families.
\end{example}

\subsection{Siebeck curves}
Siebeck \cite{Siebeck} showed that if $f\in \mathbb C[z]$ is a cubic polynomial whose
roots $z_1,z_2,z_3$ are not collinear (viewed as points in $\mathbb R^2$), then the roots
of $\partial f/\partial z$ are the foci of the unique conic tangent to the sides of the
triangle with vertices $z_1,z_2,z_3$ at their midpoints.

Thus, once the triangle $(z_1,z_2,z_3)$ is fixed, the corresponding \emph{Siebeck conic} is
\emph{unique}; equivalently, the parameter space has dimension $0$ 
for this fixed tangency problem.

However, this should not be confused with uniqueness from the foci alone. For two
prescribed real foci, the space of all real conics with those foci is one-dimensional, as
in the conic example above. The Siebeck construction therefore produces a distinguished
subclass of that one-dimensional confocal family, but the foci by themselves do not
determine a unique conic.

So, in the language of this paper:
\begin{itemize}
\item for fixed triangle data, there is a unique Siebeck conic (dimension $0$);
\item for fixed foci alone, there is a one-dimensional confocal family of conics.
\end{itemize}

Linfield \cite{Linfield}*{Thm.~1, p.~247} extended Siebeck's result to arbitrary degree, see also
\cite{CA2}*{Thm.~5.1, p.~235}. Let $f\in \mathbb C[z]$ be a polynomial of degree $n$ with roots
$z_j=x_j+iy_j$, $j=1,\dots,n$, and let
\[
H(u,v,w):=\prod_{j=1}^n (x_ju+y_jv+w).
\]
Then $H=0$ is the union of the lines in $(\mathbb P^2)^\vee$ dual to the points $z_j$,
and the polar curve of $H=0$ with respect to $(0:0:1)\in (\mathbb P^2)^\vee$ is given by
\[
\frac{\partial H}{\partial w}=0.
\]
The curve $\partial H/\partial w=0$ is dual to a curve of class $n-1$, whose foci are the
roots of $\partial f/\partial z=0$.

\begin{example}[Siebeck curves of class $n-1$]
Casas-Alvero \cite{CA1,CA2} developed this point of view further. A special case of
\cite{CA2}*{Thm.~6.1, p.~236} is the following. Let $f\in \mathbb C[z]$ be a polynomial
of degree $n$ with distinct roots $z_1,\dots,z_n$, and let $p_{j,k}$ be the midpoint of
the segment joining $z_j$ and $z_k$ in $\mathbb R^2$. Then there is a unique curve $S$,
the \emph{Siebeck curve}, of class $n-1$, tangent to each line $z_jz_k$ at $p_{j,k}$.
Its foci are the roots of $\partial f/\partial z=0$.

Again, the important point is that the curve is unique \emph{after the points
$z_1,\dots,z_n$ have been fixed}, or equivalently after all the tangency conditions have
been prescribed. For this incidence/tangency problem the parameter space therefore has
dimension $0$.

By contrast, if one fixes only the $n-1$ foci and considers \emph{all} real curves of
class $n-1$ with those foci, then by the above results the ambient confocal
family has dimension
$\binom{n-1}{2}.$ 

Thus the Siebeck curve is not uniquely determined by the foci alone in general; rather,
the Siebeck construction selects a distinguished member (or distinguished subclass) inside
that larger confocal family by imposing additional tangency conditions.

So, in this example:
\begin{itemize}
\item for fixed roots $z_1,\dots,z_n$ (equivalently, fixed tangency data), there is a unique Siebeck curve (dimension $0$);
\item for fixed foci alone, the full space of curves of class $n-1$ has dimension $\binom{n-1}{2}$.
\end{itemize}
\end{example}

\subsection{Poncelet curves}
Hunziker et al.\ \cite{Hunziker} study algebraic curves arising as envelopes of polygons
supported on the unit circle $\mathbb T\subset \mathbb C=\mathbb R^2$. In
\cite{Hunziker}*{Def.~3.2, p.~15} they define a family of $n$-Poncelet polygons to be a
family of $n$-gons $\mathcal P(z)$, $z\in\mathbb T$, inscribed in $\mathbb T$, such that
$z$ is a vertex of $\mathcal P(z)$ and such that, whenever $w\in\mathbb T$ is also a
vertex of $\mathcal P(z)$, one has $\mathcal P(w)=\mathcal P(z)$.

An \emph{$n$-Poncelet curve} is then a closed curve in the unit disc $\mathbb D$ which
envelopes such a family of $n$-Poncelet polygons. In particular, the curve determines the
corresponding family of polygons uniquely.

The authors also define \emph{complete} Poncelet curves \cite{Hunziker}*{Def.~3.11,
p.~21}. A complete $n$-Poncelet curve \emph{of minimal class} gives a real algebraic curve
$C$ of class $n-1$, all of whose real foci lie in $\mathbb D$. In this case there is
a bijection between configurations of $n-1$ points in $\mathbb D$ and complete
$n$-Poncelet curves, and $C$ can be reconstructed from its real foci.

Therefore, in the minimal-class complete case, prescribed real foci determine a
\emph{unique} Poncelet curve. Equivalently, the parameter space has dimension
$0.$ 
So this is \emph{not} a positive-dimensional confocal family.

The authors also ask whether the assumption that the curve has class $n-1$ is in fact
superfluous \cite{Hunziker}*{Rmk.~4.2, p.~29}. Thus, beyond the minimal-class complete
case, uniqueness from the foci is not asserted. 

\section*{Appendix. Comments on Conjecture~\ref{conj:main} by Eugenii Shustin.}



We demonstrate that Conjecture~\ref{conj:main} holds in a range which is not covered by Proposition~\ref{prop:BN-reformulation}.
 We use notations from Conjecture~\ref{conj:main}. Our goal is the following statement.


\begin{prop}\label{p1}
In the setting of Conjecture~\ref{conj:main}, the following holds:
\begin{enumerate}\item[(1)] If $c+d\ge2g$, then
a generic curve $D\in W$ is nodal-cuspidal.
\item[(2)] If, in addition,
\begin{equation}4n+9k\le(c+1)^2,\label{e4}\end{equation}
where $n$ and $k$ are the numbers of nodes and cusps of $D$, respectively, then
\begin{equation}H^1(\widetilde D,{\mathcal B}_D)=0.\label{e3}\end{equation}
\end{enumerate}
\end{prop}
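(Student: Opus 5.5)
For part (1) I would not work with plane equations directly but with the parametrization. Write $D$ as the image of a map $\nu\colon\widetilde D\to(\bP^2)^\vee$ of degree $c$ from a smooth curve of genus $g$. Equiclassical deformations of $D$ are then deformations of $\nu$ that keep the ramification divisor $R$ of $\nu$ (the cuspidal data) in place. They are controlled by the modified normal sheaf $N'_\nu$, obtained from the normal sheaf $N_\nu$ by dividing out its torsion and then twisting by $-R$.

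The degree bookkeeping runs as follows. We have $\deg N_\nu=3c+2g-2$. Riemann--Hurwitz for the projection from a general point gives $d=2c+2g-2-\deg R$. Together these give
\[
\deg N'_\nu=3c+2g-2-\deg R=c+d .
\]
The hypothesis $c+d\ge 2g$ therefore gives $\deg N'_\nu>2g-2$, so $H^1(N'_\nu)=0$. Hence $W$ is smooth of the expected dimension $h^0(N'_\nu)=c+d-g+1$.

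To get the generic nodal-cuspidal conclusion, I would run the standard Arbarello--Cornalba/Zariski type argument. First, every cusp-like singularity beyond an ordinary cusp (ramification of order $\ge 2$, two ramification points with the same image, and so on) and every non-nodal multiple point (triple points, tacnodes, non-transversal branches) imposes conditions on sections of $N'_\nu$. Second, these conditions are independent, because a nonspecial line bundle of degree $\ge 2g$ separates the relevant pairs of points and jets up to the required order. I expect the delicate point of part (1) to be exactly this independence in the borderline case $c+d=2g$. There I would use that only one or two points or first-order jets need to be separated at a time, and that failure for a general $D$ in $W$ would force $N'_\nu$ to lie in a proper closed subset of $\mathrm{Pic}$, which is impossible as $D$ moves in $W$.

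For part (2), I would first reduce to a statement on the plane. Take the shifted adjoint sequence of Definition~\ref{def:class-adjoint-bundle},
\[
0\to\mathcal O_{(\bP^2)^\vee}(-2)\to \mathcal I_{Z^{\rm ec}}(c-2)\to\nu_*\B_D\to0 .
\]
Since $H^1$ and $H^2$ of $\mathcal O_{(\bP^2)^\vee}(-2)$ both vanish, and $\nu$ is finite, the long exact sequence yields
\[
H^1(\widetilde D,\B_D)\cong H^1\big((\bP^2)^\vee,\mathcal I_{Z^{\rm ec}}(c-2)\big).
\]
So \eqref{e3} is equivalent to $Z^{\rm ec}$ imposing independent conditions on plane curves of degree $c-2$.

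By part (1), $Z^{\rm ec}$ is a union of $n$ reduced points at the nodes and $k$ equiclassical cusp schemes. I would then apply the known $H^1$-vanishing criterion for zero-dimensional schemes defined by singularity conditions. This is the Greuel--Lossen--Shustin criterion, proved via Reider--Bogomolov or Xu's lemma. It says $h^1(\mathcal I_Z(m))=0$ whenever $\sum\gamma(z)<(m+3)^2$, or $\le$ with a mild extra condition, where $\gamma=4$ for a node-type reduced point and $\gamma=9$ for a cusp. With $m=c-2$ we have $(m+3)^2=(c+1)^2$, so this is exactly \eqref{e4}.

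The main obstacle I foresee is to check two things: that the equiclassical cusp scheme is contained in the scheme for which the $\gamma$-bound $9$ is known, since a subscheme only makes vanishing easier; and that the equality case of \eqref{e4} is covered. For the equality case I would redo the Bogomolov-instability argument. A destabilizing curve $E$ of degree $e$ would have to satisfy $e(c+1-e)\le\sum_{z\in E}\gamma(z)/\dots$, and I would rule this out using that the curve $D$ is irreducible of degree $c$ and meets such an $E$ in at most $ce$ points, counted with multiplicity.
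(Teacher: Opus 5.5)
\textbf{Part (2).} Your reduction of \eqref{e3} to $H^1\big((\bP^2)^\vee,\mathcal I_{Z^{\rm ec}}(c-2)\big)=0$ is exactly the paper's first step. The criterion you then invoke, however, does not exist in the form you quote.

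The Greuel--Lossen--Shustin theorem [GLS2, Thm.~3.5.7] is about the twist equal to the degree of the curve carrying the singularities. For $D$ of degree $c$, it gives $H^1(\mathcal J_{Z^{ea}(D)}(c))=0$ when $\sum\gamma$ is bounded by $(c+3)^2$. A curve-independent statement of the form ``$h^1(\mathcal I_Z(m))=0$ whenever $\sum\gamma<(m+3)^2$'' is false: $m+2$ collinear reduced points satisfy $4(m+2)<(m+3)^2$, yet impose only $m+1$ conditions on curves of degree $m$.

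So the twist $m=c-2<c$ is precisely what has to be proved, and doing so is the entire content of the paper's argument:
\begin{enumerate}
\item Take a minimal $Z\subset Z^{ea}(D)$ with $h^1(\mathcal J_Z(c-2))>0$.
\item Use the twist-$c$ vanishing \eqref{e5} to control the Castelnuovo function, so that [GLS2, Lemma~3.5.6] runs with $c+1$ in place of $d+3$. This vanishing does follow from GLS, since $4n+9k\le(c+1)^2<(c+3)^2$.
\item This yields a curve $\Gamma_m\supseteq Z$ with $m\le (c+1)/2$ and $\deg Z\ge m(c+1-m)$, hence the bound \eqref{e7}.
\item Redo the GLS estimates to reach $(c+1)^2<\max\{\sum_i(\deg Z_i''+1)^2,\,4\deg Z''\}\le 4n+9k$.
\end{enumerate}
Your fallback for the equality case, Bézout with $D$ alone, does not replace this. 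A full cusp scheme on $\Gamma$ only forces $(D\cdot\Gamma)_z\ge 3$, so Bézout gives $\deg(Z\cap\Gamma)\le \frac23 ce$. That is compatible with $\deg(Z\cap\Gamma)\ge e(c+1-e)$ whenever $c/3+1\le e\le (c+1)/2$.

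\textbf{Part (1).} Your map-theoretic route differs from the paper's, and its decisive independence step is not justified. (Incidentally, $N'_\nu$ should be just $N_\nu/\mathrm{tors}$; twisting further by $-R$ would not give degree $c+d$.)
\begin{enumerate}
\item A nonspecial line bundle of degree $2g$ need not separate two points: $\omega_{\widetilde D}(p+q)$ does not separate $p$ and $q$.
\item Keeping an ordinary triple point is a linear relation among the values at three branch points. Its nontriviality on $H^0(N'_\nu)$ is not guaranteed by degree alone, even in degree $2g+1$. So your claim that only one or two points need to be separated is wrong.
\item Your remark that a failure would put $N'_\nu$ in a proper closed subset of $\mathrm{Pic}$ is not a proof. $N'_\nu\cong\omega_{\widetilde D}\otimes\nu^*\mathcal O(3)(-R)$ is determined by $\nu$, and nothing you say prevents the degenerate configuration from persisting along all of $W$.
\end{enumerate}

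The paper avoids case-by-case separation altogether. Assuming the generic member is not nodal-cuspidal, it takes (via [Sh24, Lemma~14]) a scheme $Z$ with $Z^{ec}\subsetneq Z\subseteq Z^{es}$ and $\deg Z=\deg Z^{ec}+1$. It then proves $H^1(D,\mathcal J_{Z/D}(c))=0$ on the singular curve itself by the Greuel--Karras criterion. There $\chi(\mathcal J_{Z/D}(c))=c+d-g$ is compared with $\chi(\omega_D)+\ind\le g-1$, the index being bounded through the conductor. This single vanishing gives $\dim W\le \frac{c(c+3)}2-\deg Z^{ec}-1$, hence the contradiction, with the threshold $c+d\ge 2g$ coming out directly.
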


\begin{remark}\label{r1}
For a nodal-cuspidal curve of degree $c$, condition $b\ge2g-1$ from Proposition~\ref{prop:BN-reformulation} is equivalent to $k\le c-1$, while conditions from Proposition \ref{p1} are equivalent to
$$k\le3c-2\quad\text{and}\quad 4n+9k\le(c+1)^2.$$
\end{remark}

\begin{proof}[Proof of Proposition~\ref{p1}(1)]
We argue on the contrary assuming that a generic member $D\in W$ has singular points different from ordinary nodes and cusps. The germ $(W,D)$ is an equisingular family, and its Zariski tangent space can be identified with $$H^0(D,{\mathcal J}_{Z^{es}(D)/D}(c)),$$
where ${\mathcal J}_{Z^{es}(D)/D}(c)\subset{\mathcal O}_D\otimes{\mathcal O}_{\PP^2}(c)$ is the twisted ideal sheaf of the equisingular zero-dimensional scheme $Z^{es}(D)\subset D$ associated with $D$ (see, for example, \cite[Section 2.1]{GK}, \cite[Section 2.2.2]{GLS1}, and \cite[Section 2.2.1.2]{GLS2}). By \cite[Lemma~14]{Sh24}, $Z^{ec}(D)\subsetneq Z^{es}(D)$. Take a zero-dimensional scheme $Z\subset D$ such that $Z^{ec}(D)\subsetneq Z\subset Z^{es}(D)$ and $\deg Z=\deg^{ec}(D)+1=\varkappa(D)-\delta(D)+1$ (where $\varkappa(D)$ and $\delta(D)$ are the total $\varkappa$-invariant and $\delta$-invariant of all singular points of $D$).

We will prove that
\begin{equation}H^1(D,{\mathcal J}_{Z/D}(c))=0.\label{e1}\end{equation}
If it is so, we derive
$$\dim W\le h^0(D,{\mathcal J}_{Z^{es}(D)/D}(c))\le
h^0(D,{\mathcal J}_{Z/D}(c))$$
$$=h^0(D,{\mathcal O}_D(c))-\deg Z<h^0(D,{\mathcal O}_D(c))-\deg Z^{ec}(D)$$
$$=\frac{c(c+3)}{2}-\varkappa(D)+\delta(D),$$
which contradicts the fact that
$$\dim W\ge\frac{c(c+3)}{2}-\deg Z^{ec}(D)=\frac{c(c+3)}{2}-\varkappa(D)+\delta(D).$$

To prove (\ref{e1}), we apply \cite[Proposition 5.2(i)]{GK}. In our situation, the sufficient condition for (\ref{e1}) reads
\begin{equation}\chi({\mathcal J}_{Z/D}(c))>\chi(\omega_D)+\ind_D({\mathcal J}_{Z/D}(c),\omega_D),\label{e2}\end{equation}
where $\omega_D$ is the dualizing sheaf, and the parameter $\ind_D({\mathcal F},\omega_D)$ is defined in \cite[Section 5.1]{GK}. Here
\[
\chi({\mathcal J}_{Z/D}(c))=\chi({\mathcal O}_D(c))-\deg Z
=\frac{c(c+3)}{2}-\deg Z=c+d-g,
\]
because $\deg Z=\varkappa(D)-\delta(D)+1$ and $d=c(c-1)-\varkappa(D)$. Also
\[
\chi(\omega_D)=p_a(D)-1=\frac{(c-1)(c-2)}2-1=\frac{c(c-3)}2,
\]
and
\[
\ind_D({\mathcal J}_{Z/D},\omega_D)
\le\ind_D({\mathcal J}^{cond}_D,\omega_D)=-\delta(D),
\]
since ${\mathcal J}_{Z/D}$ is a subsheaf of the conductor ideal sheaf ${\mathcal J}^{cond}_D$; see also \cite[Example in Section 5.1]{GK}. Therefore
\[
\chi(\omega_D)+\ind_D({\mathcal J}_{Z/D}(c),\omega_D)
\le p_a(D)-1-\delta(D)=g-1.
\]
The numerical hypothesis $c+d\ge2g$ gives
\[
\chi({\mathcal J}_{Z/D}(c))=c+d-g\ge g>g-1,
\]
so (\ref{e2}) holds, and hence (\ref{e1}) follows.
\end{proof}

\begin{proof}[Proof of Proposition~\ref{p1}(2)]
We can suppose that $D$ is nodal-cuspidal and $c\ge4$. In this case, $Z^{ec}(D)=Z^{es}(D)=Z^{ea}(D)$ and the shifted adjoint sequence gives
\[
\nu_*{\mathcal B}_D={\mathcal J}_{Z^{ea}(D)/D}(c-2),
\qquad
H^1(\widetilde D,{\mathcal B}_D)\simeq
H^1(D,{\mathcal J}_{Z^{ea}(D)/D}(c-2)).
\]
Here ${\mathcal J}_{Z^{ea}(D)/D}$ denotes the ideal sheaf of $Z^{ea}(D)$ on $D$, while ${\mathcal J}_{Z^{ea}(D)/\PP^2}$ denotes the corresponding ideal sheaf on the ambient plane. Note also that by \cite[Theorem 3.5.7]{GLS2},
\begin{equation}
H^1(\PP^2,{\mathcal J}_{Z^{ea}(D)/\PP^2}(c))=0.
\label{e5}
\end{equation}

To establish the $h^1$-vanishing (\ref{e3}), we closely follow the lines of the proof of \cite[Theorem 3.5.7]{GLS2}.

We argue by contradiction. Assume that
\[
h^1(D,{\mathcal J}_{Z^{ea}(D)/D}(c-2))>0.
\]
From the exact sequence
\[
0\to{\mathcal O}_{\PP^2}(-2)\overset{\cdot D}{\to}
{\mathcal J}_{Z^{ea}(D)/\PP^2}(c-2)\to
{\mathcal J}_{Z^{ea}(D)/D}(c-2)\to0
\]
and the vanishings $H^1(\PP^2,{\mathcal O}_{\PP^2}(-2))=H^2(\PP^2,{\mathcal O}_{\PP^2}(-2))=0$, we derive that
\[
h^1(\PP^2,{\mathcal J}_{Z^{ea}(D)/\PP^2}(c-2))>0.
\]
Let $Z\subset Z^{ea}(D)$ be a minimal subscheme satisfying $h^1(\PP^2,{\mathcal J}_{Z/\PP^2}(c-2))>0$. Clearly, $Z$ is a local complete intersection. We have
$$\deg Z\le n+2k<\frac{4n+9k}{4}\le\frac{(c+1)^2}{4}<\frac{(c-1)(c+2)}{2},$$
and hence there exists a curve of degree $<c$ containing $Z$. Furthermore, by \cite[Lemma 3.5.6]{GLS2} applied under the extra assumption (\ref{e5}), there exists a curve $\Gamma_m$ of degree $m\le\frac{c+1}{2}$ such that
$$h^1(\PP^2,{\mathcal J}_{Z\cap \Gamma_m/\PP^2}(c-2))=h^1(\PP^2,{\mathcal J}_{Z/\PP^2}(c-2))>0$$
and
\begin{equation}\deg(Z\cap \Gamma_m)\ge m(c+1-m)\label{e6}\end{equation}
(comparing with \cite[Lemma 3.5.6]{GLS2}, we replace the term "$d+3$" with "$c+1$" and use \cite[Figure 3.4]{GLS2} in which the right-most limit of the graph is $c$, due to (\ref{e5})). By the minimality assumption, $Z\cap \Gamma_m=Z$. Thus, inequality (\ref{e6}) implies
\begin{equation}m\le\frac{2\cdot\deg Z}{c+1+\sqrt{(c+1)^2-4\cdot\deg Z}}.\label{e7}\end{equation}
This is an analogue of \cite[Formula (3.5.0.8)]{GLS2}.
Then we repeat word-for-word the computations in the proof of \cite[Theorem 3.5.7]{GLS2} that follow the latter cited inequality
\cite[Formula (3.5.0.8)]{GLS2}, provided, that in these computations, we replace "$d+3$" with "$c+1$" and "$k$" with "$m$". In the very end, we arrive to the inequality
\begin{equation}(c+1)^2<\sum_{i=1}^r\left(\frac{(\deg Z''_i)^2}{\Delta_i}+2\cdot\deg Z''_i+\Delta_i\right),\label{e8}\end{equation}
where $Z''$ is some zero-dimensional subscheme of $Z$, $Z''=Z''_1\cup...\cup Z''_r$ is the decomposition into irreducible components, and $1\le\Delta_i\le\deg Z''_i$ for all $i=1,...,r$ (cf. the definition of $\Delta_i$ in \cite[Page 316]{GLS2}). Hence, (\ref{e8}) yields
$$(c+1)^2<\max\left\{\sum_{i=1}^r(\deg Z''_i+1)^2,\ 4\cdot\deg Z''\right\}\le4n+9k,$$
which contradicts the assumption (\ref{e4}).
\end{proof}

\end{document}